\theoremstyle{plain} 
\newtheorem{thm}{Theorem}[section]
\newtheorem{lem}[thm]{Lemma}
\theoremstyle{definition}
\theoremstyle{remark}
\newtheorem{rmk}[thm]{Remark}
  \numberwithin{equation}{section}
  \numberwithin{figure}{section}
\begin{document}

\title{\textbf{ A note on the Liouville theorem of fully nonlinear elliptic equations }}

\author{Dongsheng Li}
\author{Lichun Liang}

\address{School of Mathematics and Statistics, Xi'an Jiaotong University, Xi'an, P.R.China 710049.}
\address{School of Mathematics and Statistics, Xi'an Jiaotong University, Xi'an, P.R.China 710049.}

\email{lidsh@mail.xjtu.edu.cn}
\email{lianglichun126@stu.xjtu.edu.cn}

\begin{abstract}
In this paper,  a new method is presented to investigate the asymptotic behavior of solutions to the fully nonlinear uniformly elliptic equation $F(D^2u)=0$ in exterior domains. This method does not depend on the $C^2$ regularity of $F$ and the dimension $n$.
\end{abstract}

\keywords{Fully Nonlinear Elliptic Equation; Exterior domain; Asymptotic behavior}
\date{}
\maketitle

\section{Introduction}

\noindent

In this paper, we are concerned with the Liouville theorem of fully nonlinear elliptic equations of the form
\begin{equation}\label{eq1}
  F(D^2u)=0
\end{equation}
in the whole space and exterior domains. We assume that $F$ is uniformly elliptic, i.e., there are  two  constants $0<\lambda \leq \Lambda < \infty$ such that
$$\lambda \|N\|\leq F(M+N)-F(M)\leq \Lambda\|N\|$$
for any $M, N\in \mathcal{S}^{n\times n}$ with $N\geq 0$, where $\mathcal{S}^{n\times n}$ is the space of all real $n\times n$ symmetric matrices.

There have been a number of works on rigidity results for fully nonlinear elliptic equations.  In view of the Evans-Krylov estimate, it is easily seen that any entire quadratic growth solution $u$ of (\ref{eq1}) must be
a quadratic polynomial if $F$ is  convex or concave. In 2020, Li, Li and Yuan \cite{LLY20} made the following extension to the result: for $n\geq 3$ and $F\in C^2$, if the solution $u$ of (\ref{eq1}) in $\mathbb{R}^n \setminus \overline{B_1}$ has quadratic growth, then $u$ must be close to a quadratic polynomial at infinity with the error $|x|^{2-n}$. Furthermore, the error term $O(|x|^{2-n})$ was refined to $d|x|^{2-n}+O(|x|^{1-n})$ for some constant $d$ by Hong employing the Kelvin transformation in  \cite{Ho}.
Recently, for $n=2$, the asymptotic result was established with a logarithm term by Li and Liu \cite{LL24} using the quasiconformal mapping. These asymptotic results can be applied to some degenerate elliptic equations such as   Monge-Amp\`{e}re equations, special Lagrangian equations, quadratic Hessian equations and inverse harmonic Hessian equations.

In this paper, our purpose is to present a different proof for the asymptotic result. What's more important is that this proof  does not depend on the $C^2$ regularity assumption on $F$ and the dimension $n$.

We can now formulate our main result.
\bigskip

\begin{thm}\label{th1}
Let $F\in C^{1,\frac{2}{n}}(\mathcal{S}^{n\times n})$  be    uniformly elliptic and convex. Assume that $u$ is a viscosity solution of (\ref{eq1}) and satisfies
 \begin{equation}\label{eq5}
   |u(x)|\leq C|x|^2,\ \ \ x\in \mathbb{R}^n \setminus \overline{B_1}
 \end{equation}
 for some constant $C>0$.
 Then  there exist  some $A\in \mathcal{S}^{n\times n}$, $b, e \in \mathbb{R}^n$ and $c, d\in \mathbb{R}$ such that for any $0<\alpha<1$,

 $$u(x)=\frac{1}{2}x^{T}Ax+b\cdot x+d \Gamma (x)+c+\frac{e \cdot x}{\left|x^T(DF(A))^{-1}x\right|^{\frac{n}{2}}}+
 O(|x|^{1-n-\alpha})\ \ \ \mbox{as}\ \ \  |x|\rightarrow \infty,$$
 where
 \begin{equation*}
\Gamma (x)=\left\{
\begin{aligned}
 \left|x^T(DF(A))^{-1}x\right|^{\frac{2-n}{2}} &\ \ \ n\geq 3,\\
 \frac{1}{2}\log \left|x^T(DF(A))^{-1}x\right| & \ \ \ n=2.
\end{aligned}
\right.
\end{equation*}
 \end{thm}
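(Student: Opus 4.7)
My strategy is to reduce the equation, via a linear change of coordinates, to a perturbed Laplace equation at infinity, and then apply a Kelvin transform together with a removable-singularity argument to read off the asymptotic expansion. The argument proceeds in three main steps plus a final obstruction analysis.

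\emph{Step 1 (quadratic blow-down limit).} The rescalings $u_k(x):=4^{-k}u(2^kx)$ solve (\ref{eq1}), are uniformly bounded by $C|x|^2$, and by Evans--Krylov (using convexity of $F$) enjoy uniform $C^{2,\alpha}_{\mathrm{loc}}$ bounds. A subsequence converges in $C^2_{\mathrm{loc}}$ to an entire quadratic-growth solution, which the classical Liouville theorem forces to be a quadratic polynomial $P(x)=\tfrac12 x^TAx+b\cdot x+c$. A Pucci-operator comparison argument then rules out distinct subsequential limits, so $v:=u-P$ satisfies $v(x)=o(|x|^2)$ as $|x|\to\infty$.

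\emph{Step 2 (linearize and normalize).} Since $F(A)=0$, the $C^{1,2/n}$ regularity of $F$ yields the pointwise Taylor expansion
\[a^{ij}D_{ij}v=-R(D^2v),\qquad (a^{ij})=DF(A),\qquad |R(M)|\le C|M|^{1+2/n}.\]
Setting $B:=(DF(A))^{1/2}$ and $w(y):=v(By)$ converts this into
\[\Delta w(y)=\tilde R(D^2w(y)),\qquad |\tilde R(M)|\le C|M|^{1+2/n},\]
with $|y|^2=x^T(DF(A))^{-1}x$, which is exactly the quadratic form inside $\Gamma$.

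\emph{Step 3 (decay bootstrap and Kelvin inversion).} Using scaled interior $C^{2,\alpha}$ estimates for $\Delta$ on dyadic annuli together with the qualitative smallness $w(y)/|y|^2\to 0$, I would iteratively improve decay to obtain, for any prescribed $\varepsilon>0$, the bound $|D^2w(y)|\le C|y|^{-n+\varepsilon}$ (with the appropriate logarithmic analogue for $n=2$). For $n\ge 3$ define the Kelvin transform $W(y):=|y|^{2-n}w(y/|y|^2)$ on a punctured ball around the origin. A direct computation gives
\[\Delta W(y)=|y|^{-n-2}\,\tilde R\bigl(D^2w(y/|y|^2)\bigr),\]
and the bootstrap bound yields $|\Delta W(y)|\le C|y|^{-\varepsilon(1+2/n)}$, which lies in every $L^p$ near $0$. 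Calder\'on--Zygmund and Morrey then give $W\in C^{1,\alpha}$ at the origin for every $\alpha\in(0,1)$, so $W(y)=d+\tilde e\cdot y+O(|y|^{1+\alpha})$. Inverting produces the claimed expansion with remainder $O(|x|^{1-n-\alpha})$, and the case $n=2$ is handled identically with the logarithmic fundamental solution.

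\emph{Main obstacle.} The delicate step is the decay bootstrap, since the earlier proofs \cite{LLY20,Ho} leveraged $C^2$ regularity of $F$ to reduce to a genuinely linear equation with H\"older coefficients and invoke Schauder theory. Here $\tilde R$ is only nonlinear with growth exponent $1+\tfrac{2}{n}$, and this exponent is sharply tuned: if $|D^2w|\sim|y|^{-n}$ (the expected sharp rate), then $|\tilde R(D^2w)|\sim|y|^{-n-2}$, precisely the rate that makes $\Delta W$ bounded after Kelvin inversion. This dimensional balance is both why the $C^{1,2/n}$ hypothesis is natural and why the iteration closes uniformly in $n$, including $n=2$, thereby bypassing the dimensional restriction of \cite{LLY20} and the quasiconformal machinery of \cite{LL24}.
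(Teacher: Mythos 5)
Your Steps 2 and the Kelvin-inversion part of Step 3 do match the endgame of the paper (the paper likewise writes the equation for $u$ minus a quadratic, gets a right-hand side of size $|x|^{-n-2}$ from the $C^{1,2/n}$ modulus, and concludes via the Kelvin transform, $L^p$ theory and Sobolev embedding). The genuine gap is the ``decay bootstrap'' you rely on to reach $|D^2w(y)|\le C|y|^{-n+\varepsilon}$. Starting only from the qualitative information $w(y)=o(|y|^2)$ (equivalently $D^2w=o(1)$), scaled interior $C^{2,\alpha}$ estimates on dyadic annuli for $\Delta w=\tilde R(D^2w)$ give back exactly $D^2w=o(1)$ and nothing more: the nonlinear remainder is superlinear in $D^2w$, so it only helps once an \emph{algebraic} decay rate $|D^2w|\lesssim|y|^{-\sigma}$, $\sigma>0$, is already known, and no step of your outline produces such an initial rate. (A harmonic-type comparison on annuli cannot create decay from $o(1)$ data either; this is precisely why the earlier works needed $F\in C^2$, so as to differentiate the equation and run weak Harnack arguments on second derivatives.) The same issue is hidden in your Step 1: uniqueness of the blow-down and $u-P=o(|x|^2)$ is fine, but it is purely qualitative and cannot seed the iteration.

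The paper closes exactly this gap by a different mechanism, which your proposal has no substitute for: it constructs a quadratic polynomial $Q$ with $F(D^2Q)=0$ and $Q\le u$ in the exterior domain (by solving Dirichlet problems $F(D^2u_k)=0$ in $B_k$ with boundary data $u$, normalizing, and applying the comparison principle), so that $u-Q\ge 0$ solves a \emph{linear} nondivergence equation $a_{ij}D_{ij}(u-Q)=0$ with $a_{ij}(x)=\int_0^1F_{ij}(tD^2u+(1-t)D^2Q)\,dt\to a_{ij}^\infty$, using only $F\in C^1$. Positivity then allows invoking the asymptotic theorem for positive solutions of such equations (\cite[Theorem 2.2]{LLY20}) when $n\ge 3$, and a new Harnack-based growth lemma (Lemma \ref{le1}) when $n=2$, to obtain the initial rates $u=Q+c+o(|x|^{2-n+\delta})$, resp.\ $u=Q+O(|x|^{\delta})$; only after that does the $C^{1,2/n}$ hypothesis upgrade the coefficient convergence and feed the Kelvin-transform argument you sketched. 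To repair your proof you would need either this positivity/Harnack input (or an equivalent one, e.g.\ the Lipschitz-$F$ results of \cite{LZP} giving a fundamental-solution-type rate), since the perturbative iteration alone does not start. The $n=2$ case also cannot be ``handled identically'': the logarithmic term $d\log|x|$ must be extracted first (the paper does this via \cite[Lemmas 3.4--3.5]{LL24}), and your bootstrap as stated gives no control at all in two dimensions.
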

\bigskip

\section{The proof of Theorem \ref{th1}}

\noindent

Now let us prove the following lemma.
\bigskip
\begin{lem}\label{le1}
Let $u$ be a positive solution of $a_{ij}D_{ij}u=0$ in $\mathbb{R}^2 \setminus \overline{B_1}$, where $a_{ij}$ is uniformly elliptic with the ellipticity constants $0<\lambda\leq \Lambda<\infty$ and $a_{ij}\rightarrow \delta_{ij}$  as $|x|\rightarrow \infty$. Then for any $0<\alpha<1$, there  exists a constant $R_\alpha>1$ depending only on $\alpha$ and $a_{ij}$ such that
$$0\leq u\leq  C|x|^{\alpha}, \ \ \ x\in \mathbb{R}^n \setminus \overline{B_{R_\alpha}}$$
for some constant $C>0$.
\end{lem}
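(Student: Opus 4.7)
The plan is to set $M(r) := \sup_{|x|=r} u$ and prove that for every $L > 1$ one has $M(Lr)/M(r) \to 1$ as $r \to \infty$. Given this, fixing $\alpha > 0$ and picking $L$ such that $M(Lr)/M(r) \leq L^{\alpha}$ for all $r$ beyond some threshold $R_\alpha$, an iteration along the geometric sequence $R_\alpha, LR_\alpha, L^2 R_\alpha, \ldots$ (interpolated across $[L^j R_\alpha, L^{j+1} R_\alpha]$ by a single Harnack bound) yields $u(x) \leq C|x|^{\alpha}$ for $|x| \geq R_\alpha$. Thus the crux is the ratio-asymptotics.

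I would prove the ratio-asymptotics by a blow-down and contradiction argument. If it failed, there would exist $L_0 > 1$, $\varepsilon_0 > 0$, and $R_k \to \infty$ with $|M(L_0 R_k)/M(R_k) - 1| \geq \varepsilon_0$. Rescaling by $v_k(x) := u(R_k x)/M(R_k)$, each $v_k \geq 0$ satisfies $\tilde a^k_{ij}(x) D_{ij} v_k = 0$ with $\tilde a^k_{ij}(x) := a_{ij}(R_k x)$ still uniformly elliptic with the original constants and $\tilde a^k_{ij} \to \delta_{ij}$ uniformly on each compact subset of $\mathbb{R}^2 \setminus \{0\}$ (since $|R_k x| \to \infty$ there), while $\max_{|x|=1} v_k = 1$. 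The Krylov--Safonov Harnack inequality applied on annuli $\{2^{-m} \leq |x| \leq 2^m\}$ gives uniform bounds, and the associated interior $C^{\beta}$ estimate gives equicontinuity of $\{v_k\}$ on compacts of $\mathbb{R}^2 \setminus \{0\}$. Arzelà--Ascoli together with the stability of viscosity solutions under uniform convergence of coefficients produce a subsequential limit $v_\infty \geq 0$ which is harmonic on $\mathbb{R}^2 \setminus \{0\}$ with $\max_{|x|=1} v_\infty = 1$.

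The decisive step is then a Liouville-type classification: every non-negative harmonic function on $\mathbb{R}^2 \setminus \{0\}$ is constant. Bôcher's theorem near $0$ writes $v_\infty(x) = -c_1 \log|x| + h_1(x)$ with $c_1 \geq 0$ and $h_1$ harmonic on $B_1$; the two-dimensional Kelvin transform combined with Bôcher applied at $\infty$ gives $v_\infty(x) = c_2 \log|x| + O(1)$ as $|x| \to \infty$ with $c_2 \geq 0$. Consequently $v_\infty + c_1 \log|x|$ extends to a smooth harmonic function on $\mathbb{R}^2$ whose behavior at infinity is $(c_1 + c_2) \log|x| + O(1)$; the mean-value property forces $c_1 + c_2 = 0$, so $c_1 = c_2 = 0$, and Liouville for bounded entire harmonic functions then gives $v_\infty \equiv 1$. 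In particular $\max_{|x|=L_0} v_\infty = 1$, so $M(L_0 R_k)/M(R_k) = \max_{|x|=L_0} v_k \to 1$ along the subsequence, contradicting the choice of $R_k$.

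The main obstacle I anticipate is this classification/blow-down step: one must run the Bôcher--Kelvin--Liouville chain at both $0$ and $\infty$, and then pass the rescaled sequence $\{v_k\}$ through the viscosity-solution framework using only the qualitative convergence $a_{ij} \to \delta_{ij}$ (no quantitative rate) and measurable coefficients. Both ingredients are standard, but they should be invoked carefully; once in place, the iteration sketched in the first paragraph finishes the proof in a few lines.
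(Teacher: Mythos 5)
Your proof is correct in outline, but it takes a genuinely different route from the paper's. The paper argues by an explicit barrier: it checks that $v(x)=|x|^{\alpha}$ is a subsolution of $a_{ij}D_{ij}v\geq 0$ outside $B_{R_\alpha}$ (this is the only place $a_{ij}\to\delta_{ij}$ is used, and it fixes $R_\alpha$ depending only on $\alpha$ and the modulus of convergence of the coefficients), and then shows $\sup u/v<\infty$ using just the Harnack inequality on spheres and the comparison principle on annuli. You instead prove the ratio asymptotics $M(Lr)/M(r)\to 1$ by blow-down: Krylov--Safonov Harnack and $C^{\beta}$ estimates give compactness of $v_k(x)=u(R_kx)/M(R_k)$ on compact subsets of $\mathbb{R}^2\setminus\{0\}$, stability of the Pucci classes under locally uniform convergence makes the limit harmonic, and a Liouville-type classification (every nonnegative harmonic function on $\mathbb{R}^2\setminus\{0\}$ is constant, via B\^ocher at $0$, Kelvin plus B\^ocher at infinity) forces the limit to be $\equiv 1$; iteration along $L^jR_\alpha$ then yields the growth bound. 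Each step checks out: the classification is true (your appeal to the mean-value property should be spelled out, e.g.\ averages of the extended harmonic function $v_\infty+c_1\log|x|$ over circles centered at the origin are constant while the expansion at infinity makes them behave like $(c_1+c_2)\log r+O(1)$, or invoke Liouville for entire harmonic functions bounded below), and the interpolation across $[L^jR_\alpha,L^{j+1}R_\alpha]$ can in fact be done with the maximum principle alone, no Harnack needed. The trade-off: the paper's barrier argument is short, elementary, and gives $R_\alpha$ independent of $u$, exactly as stated in the lemma; your argument is softer and more conceptual (it identifies the constant blow-down profile as the reason for sub-polynomial growth) but uses heavier machinery and produces a threshold that a priori depends on $u$ through the compactness/contradiction step --- harmless for the application, since the constant $C$ may depend on $u$ and the intermediate compact annulus can be absorbed into $C$, but it is a point where your statement is slightly weaker than the lemma's.
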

\begin{proof}
\emph{Step 1:} We will show that $v:=|x|^{\alpha}$ is a subsolution of $a_{ij}D_{ij}u=0$ in $\mathbb{R}^2 \setminus \overline{B_{R_\alpha}}$ for some $R_{\alpha}>1$.

A simple computation yields
$$D_{ij}v=\alpha|x|^{\alpha-2}\delta_{ij}+\alpha(\alpha-2)|x|^{\alpha-4}x_ix_j$$
and $$\Delta v=\alpha^2|x|^{\alpha-2}.$$
Then we have
$$a_{ij}D_{ij}v=(a_{ij}-\delta_{ij})D_{ij}v+\Delta v\geq (\alpha^2-\epsilon(|x|)\alpha(3-\alpha))|x|^{\alpha-2},$$
where $\epsilon(|x|) \rightarrow 0$ as $|x|\rightarrow \infty$.
Hence letting $R_\alpha$ be chosen large enough such that
$$\alpha^2-\epsilon(|x|)\alpha(3-\alpha)\geq 0,\ \ \ |x|\geq R_\alpha,$$
we obtain the desired result.

\emph{Step 2:} We will prove that  $$a:=\sup_{x\in \mathbb{R}^2 \setminus \overline{B_{R_\alpha}}}\frac{u(x)}{v(x)}< +\infty.$$

To obtain a contradiction, suppose $a=+\infty$. Then for any constant $\varepsilon>0$, there exist  sequences of $\{R_i\}_{i=1}^{\infty}\subset \mathbb{R}^{+}$ increasing and $\{x_i\}_{i=1}^\infty$ with $R_1\geq R_\alpha$ and $x_i\in \partial B_{R_i}$ such that
$$\varepsilon u(x_i)\geq v(x_i).$$
Applying the Harnack inequality to $u$ in $\partial B_{R_i}$, we have
$$\varepsilon u(x)\geq C \varepsilon u(x_i)\geq Cv(x_i) =Cv(x),\ \ \ x\in \partial B_{R_i},$$
where $C$ depends only on $\lambda$, $\Lambda$ and $n$. Next, for any $x\in \mathbb{R}^2 \setminus \overline{B_{R_\alpha}}$ with $R_\alpha\leq |x|\leq R_i$ for some $i\geq4$, by the comparison principle in $B_{R_i} \setminus \overline{B_{R_\alpha}}$, we obtain
$$v(x)\leq \max_{ \partial B_{R_\alpha}}v+\frac{\varepsilon}{C}u(x), \ \ \ x\in \mathbb{R}^2 \setminus \overline{B_{R_\alpha}}.$$
Then letting $\varepsilon\rightarrow 0$ yields
$$|x|^\alpha=v(x)\leq \max_{ \partial B_{R_\alpha}}v=|R_\alpha|^\alpha, \ \ \ x\in \mathbb{R}^2 \setminus \overline{B_{R_\alpha}},$$
which leads to a contradiction.

\emph{Step 3:} We complete this proof.

By the definition of $a$, there exist some $R\geq R_\alpha$ and $x_0\in \partial B_{R}$ such that
$$u(x_0)\geq \frac{a}{2}v(x_0).$$
If $R_\alpha\leq R\leq R_\alpha+1$, then we have
$$a\leq \frac{2u(x_0)}{v(x_0)}=\frac{2u(x_0)}{|R|^\alpha}\leq 2 |R_\alpha|^{-\alpha}\|u\|_{L^{\infty}(B_{R_\alpha+1}\setminus B_{R_\alpha})}.$$
If $R\geq R_\alpha+1$, by the Harnack inequality in $\partial B_{R}$, then we have
$$u(x)\geq Cu(x_0)\geq C\frac{a}{2}v(x_0) =C\frac{a}{2}v(x),\ \ \ x\in \partial B_{R},$$
where $C$ depends only on $\lambda$, $\Lambda$ and $n$.
Furthermore, applying  the comparison principle yields
$$C\frac{a}{2} v(x)\leq C\frac{a}{2}|R_\alpha|^\alpha+u(x), \ \ \ x\in B_R \setminus \overline{B_{R_\alpha}}.$$
Taking $x\in  \partial B_{R_\alpha+1}$, we obtain
$$a\leq C u(x)\leq C \|u\|_{L^{\infty}(B_{R_\alpha+1}\setminus B_{R_\alpha})}.$$
\end{proof}
\bigskip

\begin{proof}[Proof of Theorem \ref{th1}]
By  the Evans-Krylov theorem, $u\in C^2$. We assume $F(0)=0$.
The proof will be divided into five steps.

\emph{Step 1:} We will show that there exists some $A\in \mathcal{S}^{n\times n}$ such that
$$D^2u(x)\rightarrow A \ \ \ \mbox{as}\ \ \  |x|\rightarrow \infty.$$

For any $e\in \mathbb{R}^n$ with $|e|=1$,  the pure second derivative $u_{ee}$ belongs to the subsolution set $\underline{S}(\lambda/n,\Lambda)$ (see \cite{CC95} for the definition). Therefore, by the aid of the weak Harnack inequality and the Evans-Krylov estimate, we can complete this step (see \cite[Lemma 2.1]{LLY20} for the detailed proof).

\emph{Step 2:} We will find some quadratic polynomial $Q(x)=\frac{1}{2}x^{T}\tilde{A}x+\tilde{b}\cdot x+\tilde{c}$ for some $\tilde{A}\in \mathcal{S}^{n\times n}$ with $F(\tilde{A})=0$, $\tilde{b}\in \mathbb{R}^n$ and $\tilde{c}\in \mathbb{R}$ such that $Q\leq u$ in $\mathbb{R}^n \setminus \overline{B_1}$.

For $k>1$, let $u_k$  be a viscosity solution of
\begin{equation*}
   \left\{
\begin{aligned}
   &F(D^2u_k)=0\ \ \mbox{in}\ \ B_k,\\
&u_k=u\ \  \mbox{on}\ \ \partial B_k.
\end{aligned}
\right.
 \end{equation*}
 Clearly, there exists infinite $k>1$ such that we have either
$$\max_{x\in \partial B_1}(u_{k}(x)-u(x))\geq 0$$
or
$$\min_{x\in \partial B_1}(u_{k}(x)-u(x))\leq 0.$$
Without loss of generality, we  assume  there exists infinite $k>1$ such that $M_k:=\max_{x\in \partial B_1}(u_{k}(x)-u(x))=u_{k}(x_k)-u(x_k)>0$ for $x_k\in \partial B_1$ and set

 \begin{equation*}
  \bar{u}_k(x)=u_{k}(x)-M_k
 \end{equation*}
 for all $x\in B_k$. Clearly, observing
 $$\bar{u}_k \leq u\ \ \mbox{on}\ \ \partial B_1\cup \partial B_{k},$$
 we apply the comparison principle to obtain
 \begin{equation}\label{eq6}
   \bar{u}_k \leq u \ \ \mbox{in}\ \ B_{k}\backslash \overline{B_1}.
 \end{equation}
For any $r>1$ and $k\geq 2 r$, applying the Evans-Krylov estimate and the Alexandroff-Bakelman-Pucci estimate to $F(D^2\bar{u}_k)=0$ in $B_k$, we have
$$\|D^2\bar{u}_k\|_{L^{\infty}(B_r)}\leq \|D^2u_k\|_{L^{\infty}(B_{k/2})}\leq \frac{C}{k^2}\|u_k\|_{L^{\infty}(B_{k})} \leq \frac{C}{k^2}\|u_k\|_{L^{\infty}(\partial B_{k})},
$$
where $C>0$ depends only on $n$, $\lambda$ and $\Lambda$. Combining this with the quadratic growth condition (\ref{eq5}), we know that $D^2\bar{u}_k$ is bounded in $B_r$ for $k\geq 2 r$. Now we consider the function
$$w_k(x)=\bar{u}_k(x)-D\bar{u}_k(x_k)\cdot (x-x_k)$$ and then $w_k(x_k)=|Dw_k(x_k)|=0$. Therefore, it is easily seen that
$$\|w_k\|_{L^{\infty}(B_r)}\leq C r^2$$
for some constant $C>0$ independent of $k$.
Furthermore, from the H\"{o}lder estimate applied to $F(D^2w_k)=0$ in $B_k$, it follows that there exists a subsequence of $\{w_k\}_{k\geq 2}^\infty$ that converges uniformly to a function $w\in C(\mathbb{R}^n)$ in compact sets  of $\mathbb{R}^n$.
Moreover, we have
$$F(D^2w)=0\ \ \mbox{in}\ \ \ \mathbb{R}^n$$
in the viscosity sense with
$$|w(x)|\leq C(1+|x|^2), \ \ x\in \mathbb{R}^n$$
for some constant $C>0$.  Therefore, by the   Evans-Krylov estimate, we see that
$w$ is a  quadratic polynomial $P$.

Recalling (\ref{eq6}), we have
\begin{equation}\label{eq7}
  w_k(x) \leq u(x)-D\bar{u}_k(x_k)\cdot (x-x_k), \ \ \ x\in  B_{k}\backslash \overline{B_1}.
\end{equation}
By taking some point $x_0\in \partial B_3(x_k)$ such that $x_0-x_k$ along the direction of $D\bar{u}_k(x_k)$, we obtain
$$3|D\bar{u}_k(x_k)|\leq u(x_0)-w_k(x_0)\leq C$$
 for some constant $C>0$ independent of $k$. Consequently, there exist some $b_\infty\in \mathbb{R}^n$ and $x_\infty \in \partial B_1$ such that, up to a subsequence,
 $$D\bar{u}_k(x_k) \rightarrow b_{\infty}\ \ \ \mbox{and}\ \ \ x_k \rightarrow x_\infty.$$
 Finally, letting $k\rightarrow \infty$ in (\ref{eq7}), we have
 $$P=w \leq u(x)-b_\infty\cdot (x-x_\infty), \ \ \ x\in  B_{k}\backslash \overline{B_1}.$$
 Hence $Q=P+b_\infty\cdot (x-x_\infty)$ is the desired quadratic polynomial.

\emph{Step 3:} We claim that $A=\tilde{A}$ and
\begin{equation}\label{eq4}
  |D^2u(x)-A|=O(|x|^{-n+\delta})\ \ \ \mbox{as}\ \ \  |x|\rightarrow \infty
\end{equation}
 for all $\delta>0$.

 Clearly, $u-Q$ satisfies the linear elliptic equation
 \begin{equation}\label{eq8}
   a_{ij}(x)D_{ij}(u-Q)(x)=0,\ \  \ x\in \mathbb{R}^n \setminus \overline{B_1},
 \end{equation}
where $a_{ij}(x)=\int_{0}^1 F_{ij}\left(tD^2u(x)+(1-t)D^2Q(x)\right)\,dt$. Since $F\in C^1$, we combine this with Step 1 to conclude that
$$a_{ij}(x) \rightarrow a_{ij}^{\infty}\ \ \ \mbox{as}\ \ \  |x|\rightarrow \infty,$$
where $a_{ij}^{\infty}=\int_{0}^1 F_{ij}\left(tA+(1-t)\tilde{A}\right)\,dt$.

\emph{Case $n\geq3$.} By the asymptotic theorem established by Li, Li and Yuan \cite[Theorem 2.2]{LLY20} for positive solutions of non-divergence linear elliptic equations, there exists a constant $c$ such that
\begin{equation}\label{eq2}
  u(x)=Q(x)+c+o(|x|^{2-n+\delta}) \ \ \ \mbox{as}\ \ \  |x|\rightarrow \infty
\end{equation}
 for all $\delta>0$.

 For any $x\in \mathbb{R}^n \setminus \overline{B_1}$ with $|x|>2$, let
 $$v(y)=\left(\frac{2}{|x|}\right)^2(u-Q-c)\left(x+\frac{|x|}{2}y\right),\ \  \ y\in B_1.$$
 Then we have $$F(D^2v(y)+\tilde{A})=0,\ \  \ y\in B_1.$$
 Applying the Evans-Krylov estimate, we obtain
 $$|D^2u(x)-\tilde{A}|=|D^2v(0)|\leq C |x|^{-n+\delta}, \ \ \ x\in \mathbb{R}^n \setminus \overline{B_1}$$
 for some constant $C>0$,  which easily yields $A=\tilde{A}$ and (\ref{eq4}).

 \emph{Case $n=2$.} By Lemma \ref{le1}, we have
 \begin{equation}\label{eq3}
   u(x)=Q(x)+O(|x|^{\delta}) \ \ \ \mbox{as}\ \ \  |x|\rightarrow \infty
 \end{equation}
 for all $\delta>0$. Following
the same arguments to the case $n\geq 3$, we also obtain $A=\tilde{A}$ and (\ref{eq4}).

 \emph{Step 4:} The aim of this step is to refine the error to  $O(|x|^{2-n})$ $(n\geq 3)$ and $O(|x|^{-1})$ $(n=2)$
for  the asymptotic results (\ref{eq2}) and (\ref{eq3}).

Without loss of generality, we can assume $a_{ij}^{\infty}=\delta_{ij}$.
 We take $0<\delta<\frac{1}{2}$ small enough. Since $F\in C^{1,\frac{2}{n}}$, we have

 $$|a_{ij}(x)-\delta_{ij}|\leq C|x|^{\frac{2}{n}(-n+\delta)},\ \ \ x\in \mathbb{R}^n \setminus \overline{B_1}$$
 for some constant $C>0$.

 \emph{Case $n\geq3$.}
 Once again using the asymptotic theorem (\cite[Theorem 2.2]{LLY20}), we obtain
 $$u(x)=Q(x)+c+O(|x|^{2-n}) \ \ \ \mbox{as}\ \ \  |x|\rightarrow \infty.$$

  \emph{Case $n=2$.} Clearly, we have
  $$\Delta (u-Q)(x)=(\delta_{ij}-a_{ij}(x))D_{ij}(u-Q)(x)=O(|x|^{-2+\delta}|x|^{-2+\delta}) \ \ \ \mbox{as}\ \ \  |x|\rightarrow \infty.$$
  By \cite[Lemma 3.4]{LL24}, there exists some function
  $$w(x)=O(|x|^{-2+2\delta+\epsilon})$$
  for any $0<\epsilon<1$ such that $$\Delta (u-Q-w)(x)=0$$
  and $$(u-Q-w)(x)=O(|x|^\delta)\ \ \ \mbox{as}\ \ \  |x|\rightarrow \infty.$$
  Then applying \cite[Lemma 3.5]{LL24} leads to
  $$(u-Q-w)(x)=d \log|x|+c+O(|x|^{-1})$$
  for some $c, d\in \mathbb{R}$. Hence
  $$u(x)=Q(x)+d \log|x|+c+O(|x|^{-1})$$

  \emph{Step 5:} We will determine the term $\frac{x}{|x|^n}$.

  \emph{Case $n\geq3$.} We consider the function
  $$E(x)=u(x)-Q(x)-c=O(|x|^{2-n}) \ \ \ \mbox{as}\ \ \  |x|\rightarrow \infty.$$
  By the  Evans-Krylov estimate, we have
  $$ D^2E(x)=O(|x|^{-n}) \ \ \ \mbox{as}\ \ \  |x|\rightarrow \infty.$$
  Since $E$ satisfies the linear equation (\ref{eq8}) and $F\in C^{1,\frac{2}{n}}$, we obtain
  $$\Delta E(x)=(\delta_{ij}-a_{ij}(x))D_{ij}E(x):=g(x)=O(|x|^{-2}|x|^{-n}), \ \ \ x\in \mathbb{R}^n \setminus \overline{B_R}$$
  for some constant $R>1$.
  Hence the Kelvin transformation of $E$, defined by
$$K[E](x)=|x|^{2-n}E\left(\frac{x}{|x|^2}\right) \ \ \mbox{for}\ \ x\in B_{\frac{1}{R}}\setminus \{0\},$$
satisfies
$$\Delta K[E](x)=|x|^{-n-2}g\left(\frac{x}{|x|^2}\right):=\tilde{g}(x)
  \ \ \mbox{in}\ \ B_{\frac{1}{R}}\setminus \{0\},$$
where $\tilde{g}$ satisfies
$$|\tilde{g}(x)| \leq C \ \ \mbox{in}\ \ B_{\frac{1}{R}} \backslash \{0\}$$
for some constant $C>0$.

 Clearly, $\tilde{g}\in L^{\infty}(B_{1/R})$. By the $L^p$ regularity, we have $K[E] \in W^{2,p}(B_{1/R})$ for any $1<p<\infty$. Hence $K[E] \in C^{1,\alpha}\left(B_{1/R}\right)$ for any $0<\alpha \leq 1-\frac{n}{p}$ with $p\in (n,\infty)$, which follows from the Sobolev imbedding theorem. More precisely,  there exist some constant $\widetilde{C}>0$  and  linear function $\tilde{b}\cdot x+\tilde{c}$ such that
$$\left|K[E](x)-\tilde{b}\cdot x-\tilde{c}\right| \leq \widetilde{C}|x|^{1+\alpha}\ \ \mbox{in}\ \ B_{\frac{1}{R}}.$$
Therefore, going back to $E$, we have
$$\left|E(x)-\tilde{c}|x|^{2-n}- \frac{\tilde{b}\cdot x}{|x|^n}\right| \leq  \frac{\widetilde{C}}{|x|^{n-1+\alpha}}\ \ \mbox{in}\ \ \mathbb{R}^n\backslash \overline{B_{R}}.$$

\emph{Case $n=2$.} We let
$$E(x)=u(x)-Q(x)-d \log|x|-c=O(|x|^{-1}) \ \ \ \mbox{as}\ \ \  |x|\rightarrow \infty.$$
 The rest of the proof runs as before (or see  \cite[pp. 10-11]{LL24} for the detailed proof).
\end{proof}
\bigskip
\begin{rmk}
For $n\geq 3$, if $F\in C^1$, we can obtain the error term $o(|x|^{2-n+\delta})$ for any $\delta>0$; if $F\in C^{1,\gamma}$ for some $0<\gamma<1$, the error term is $O(|x|^{2-n})$.
When $F$ is Lipschitz continuous, Lian and Zhang \cite{LZP} obtained an asymptotic result with the error $O\left(|x|^{1-(n-1)\frac{\lambda}{\Lambda}}\right)$ that
is the fundamental solution of the Pucci's operator (see \cite{L01,ASS11} for more details).
 \end{rmk}

\bigskip
\section*{Acknowledgement}
This work is supported by NSFC 12071365.


\begin{thebibliography}{99}

\bibitem{ASS11}S. N. Armstrong, B. Sirakov, and C. K. Smart, Fundamental solutions of homogeneous fully nonlinear elliptic equations, Comm. Pure Appl. Math., 64 (2011), no. 6, 737-777.

\bibitem{CC95}Caffarelli, Luis A.; Cabr\'{e}, Xavier Fully nonlinear elliptic equations. American Mathematical Society Colloquium Publications, 43. American Mathematical Society, Providence, RI, 1995. vi+104 pp. ISBN: 0-8218-0437-5
\bibitem{L01}D. A. Labutin, Isolated singularities for fully nonlinear elliptic equations, J. Differential Equations, 177 (2001), no. 1, 49-76.
\bibitem{Ho} G.H. Hong, A Remark on Monge-Amp\`{e}re equation over exterior domains, 2020, arXiv:2007.12479.
\bibitem{LLY20}D. Li, Z. Li, and Y. Yuan, A Bernstein problem for special Lagrangian equations in exterior domains, Adv. Math. 361 (106927) (2020) 29 pp.
\bibitem{LL24}D. Li and R. Liu, Quasiconformal mappings and a Bernstein type theorem over exterior domains in $\mathbb{R}^2$, Calc. Var. Partial Differential Equations 63 (2024), no. 8, Paper No. 209, 13 pp.
\bibitem{LZP} Y. Y. Lian and K. Zhang, Asymptotic behavior for fully nonlinear elliptic equations in exterior domains, arXiv:2401.05829.

\end{thebibliography}
\end{document}